\DeclarePairedDelimiter\ceil{\lceil}{\rceil}
\newcommand*{\rom}[1]{\expandafter\@slowromancap\romannumeral #1@}
\newcommand{\re}{\mathbb{R}}
\newcommand{\bE}{\mathbb{E}}
\newcommand{\N}{\mathbb{N}}
\def\af{\alpha}
\newcommand{\st}{\mathrm{s.t.}}
\newcommand{\reff}[1]{(\ref{#1})}
\newcommand{\eps}{\epsilon}
\newcommand{\mt}[1]{\mathtt{#1}}
\def\rank{\mbox{rank}}
\newcommand{\bdes}{\begin{description}}
\newcommand{\edes}{\end{description}}
\newcommand{\bal}{\begin{align}}
\newcommand{\eal}{\end{align}}
\newcommand{\bnum}{\begin{enumerate}}
\newcommand{\enum}{\end{enumerate}}
\newcommand{\bit}{\begin{itemize}}
\newcommand{\eit}{\end{itemize}}
\newcommand{\bea}{\begin{eqnarray}}
\newcommand{\eea}{\end{eqnarray}}
\newcommand{\be}{\begin{equation}}
\newcommand{\ee}{\end{equation}}
\newcommand{\baray}{\begin{array}}
\newcommand{\earay}{\end{array}}
\newcommand{\bsry}{\begin{subarray}}
\newcommand{\esry}{\end{subarray}}
\newcommand{\bca}{\begin{cases}}
\newcommand{\eca}{\end{cases}}
\newcommand{\bcen}{\begin{center}}
\newcommand{\ecen}{\end{center}}
\newcommand{\bbm}{\begin{bmatrix}}
\newcommand{\ebm}{\end{bmatrix}}
\newcommand{\btab}{\begin{tabular}}
\newcommand{\etab}{\end{tabular}}
\newtheorem{theorem}{Theorem}[section]
\newtheorem{alg}[theorem]{Algorithm}
\theoremstyle{definition}
\newtheorem{exmp}{Example}[section]
\numberwithin{equation}{section}
\title[Stochastic Polynomial Optimization]
{Stochastic Polynomial Optimization}
\author[Jiawang Nie]{Jiawang~Nie}
\address{Jiawang Nie, Suhan Zhong, Department of Mathematics,
University of California San Diego,
9500 Gilman Drive, La Jolla, CA, USA, 92093.}
\email{njw@math.ucsd.edu,suzhong@ucsd.edu}
\author[Liu Yang]{Liu~Yang}
\address{Liu Yang, school of Mathematics and Computational Sciences,
Xiangtan University, Xiangtan, Hunan, China, 411105.}
\email{yangl410@xtu.edu.cn}
\author[Suhan Zhong]{Suhan~Zhong}
\begin{document}

\subjclass[2010]{90C15,90C22,90C31,65K05}

\date{}

\keywords{stochastic optimization, polynomial,
Moment-SOS relaxation, semidefinite program}

\begin{abstract}
This paper studies stochastic optimization problems with polynomials.
We propose an optimization model with sample averages and perturbations.
The Lasserre type Moment-SOS relaxations
are used to solve the sample average optimization.
Properties of the optimization and its relaxations are studied.
Numerical experiments are presented.
\end{abstract}

\maketitle

\section{Introduction}

Stochastic optimization is about functions that depend on random variables.
A typical stochastic optimization problem is
\be \label{stop:minE(f):K}
\min_{ x \in K} \quad
f(x) \,:=\, \bE [F(x,\xi)]
\ee
where $F: \re^n \times \re^r \to \re$ is a function in
$(x, \xi)$. The variable $\xi$ is a random vector, and
the decision variable $x \in \re^n$ is required
to be contained in a set $K \subseteq \re^n$.
In \reff{stop:minE(f):K}, the symbol $\bE$ denotes
the expectation of a function in the random vector $\xi$.
Frequently used methods for solving stochastic optimization
are often based on sample average approximation (SAA). We refer to
\cite{branda2012sample,fu2015handbook,
kleywegt2002sample,linderoth2006empirical,luedtke2008sample,
plambeck1996sample,robinson1996analysis,santoso2005stochastic,
shalev2009stochastic,shapiro2009lectures,verweij2003sample}
for related work on stochastic optimization.
The SAA methods use sample averages to approximate the expectation function $f(x)$,
transforming the stochastic optimization into deterministic optimization.
Many classical SAA methods assume the objective functions
are convex and are based on evaluations of gradients or subgradients.
They can also be applied to nonconvex problems, however,
the global optimality may not be guaranteed.
There exists relatively less work on nonconvex stochastic optimization
\cite{bastin2006convergence,ghadimi2013stochastic,ghadimi2016mini}.
Generally, nonconvex stochastic optimization problems are
computationally challenging, because the deterministic case is already difficult.

This paper discusses the special case of stochastic polynomial optimization,
i.e., $F(x,\xi)$ is a polynomial function in $x$
and $K$ is a semialgebraic set defined by polynomials.
When $F$ does not depend on $\xi$, this is the case of polynomial optimization.
The Lasserre type Moment-SOS relaxations
are efficient and reliable for solving polynomial optimization \cite{LasBk15,Lau09}.
When $F$ depends on the random vector $\xi$,
the objective $f(x)$ is the expectation of $F(x,\xi)$
with respect to $\xi$. Hence, $f(x)$ is still a polynomial.
When the distribution of $\xi$ is known explicitly (e.g, Gaussian, Poisson, etc),
the objective $f$ can be expressed by integral formula.
However, when the distribution of $\xi$ is not known exactly,
or its density function is too complicated for evaluating the expectation,
it is not practical to get an explicit formula for $f$.
In most methods for stochastic optimization, the objective $f$
is approximated by sample averages.
In this article, we discuss how to use the efficient
Moment-SOS relaxation for solving stochastic optimization
with sample average approximation.

In stochastic polynomial optimization, we assume that
\[
F(x,\xi) :=
\sum_{ \substack{ \af = (\af_1, \ldots, \af_n)  }  }
c_\af(\xi) x_1^{\af_1} \cdots x_n^{\af_n}
\]
is a polynomial in $x \in \re^n$.
Here, each coefficient $c_\af(\xi)$ is a measurable function in $\xi$.
The feasible set $K$ is assumed to be in the form
\be  \label{set:K:g>=0}
K \, := \, \{x \in \re^n:\ g_1(x)\geq 0, \ldots, g_m(x) \geq 0\},
\ee
for given polynomials $g_1, \ldots, g_m$ in $x \in \re^n$.
For each $x$, $F(x,\xi)$ is a measurable function in $\xi$.
So the objective $f(x) :=\bE [F(x,\xi)]$ is also a polynomial.
The stochastic optimization can be expressed as
\be \label{eq:original}
\left\{ \baray{rl}
\min & f(x) :=  \bE\big[ F(x,\xi) \big]   \\
\st & g_1(x)\geq 0, \ldots, g_m(x) \geq 0.
\earay \right.
\ee
The coefficients of the polynomial $f$
are typically not known explicitly,
because the true distribution of $\xi$ is usually not known exactly.
However, they can be estimated by sample average approximation.
In applications, we can generate samples of $\xi$,
say, $N$ random samples $\xi^{(1)},\xi^{(2)},...,\xi^{(N)}$.
The expectation function $f(x) = \bE [F(x,\xi)]$
can be approximated by the sample average
\[
f_N(x) \, := \, \frac{1}{N}\sum\limits_{k=1}^N F(x,\xi^{(k)}).
\]
If each sample $\xi^{(k)}$ obeys the same distribution of $\xi$,
then $\bE [f_N(x)] = f(x)$. Furthermore, when all $\xi^{(k)}$
are independently identically distributed (i.i.d.),
the Law of Large Numbers (LLN) (see \cite{judd1985law}) implies that
\[
f_N(x) \to f(x) \qquad \text{as}\ N \to \infty,
\]
with probability one and under some regularity conditions.
The resulting sample average approximation for \reff{eq:original} is
\be \label{SAA}
\left\{ \baray{rl}
\min & f_N(x) \\
\st & g_1(x)\geq 0, \ldots, g_m(x) \geq 0.
\earay \right.
\ee
This is also a polynomial optimization problem.
It can be solved globally by the Lasserre type
Moment-SOS hierarchy of relaxations \cite{Las01}.

Sample average approximation methods have good statistical properties.
For convenience, denote by $\vartheta^*,\vartheta_N$ the optimal values of
(\ref{eq:original}) and (\ref{SAA}) respectively,
and denote by $S,S_N$ their optimizer sets respectively.
Assume that: i) $f$ is continuous and $S$ is nonempty;
ii) there is a compact set $C\subseteq\mathbb{R}^n$
such that $S \subseteq C$ and $f_N$ converges uniformly to
$f$ on $C$, with probability one;
iii) for all $N$ large enough, $\emptyset\not=S_N\subseteq C$.
Then, it can be shown (see \cite{shapiro2009lectures})
that $\vartheta_N \longrightarrow \vartheta^*$ and
$\mbox{dist}(S_N,S)\longrightarrow 0$
\footnote{For sets $A,B \subseteq  \re^n$, their distance is defined as
\[
\mbox{dist}(A,B) := \max \left\{
\sup_{x\in A}\inf_{y \in B} \|x-y\|,\,
\sup_{x\in B}\inf_{y \in A} \|x-y\|
\right\}.
\] }
as $N\rightarrow \infty$, with probability one. Moreover, if
$\xi^{(1)},\xi^{(2)},...,\xi^{(N)}$ are independently identically distributed
as $\xi$ is, then
$\bE [\vartheta_N] \, \leq \, \bE [\vartheta_{N+1}] \, \leq \, v^*$.
That is, as the sample size $Z$ increases,
the sample average optimization gives better approximation for \reff{eq:original}.
We refer to \cite{shapiro2009lectures} for more details about
the convergence of the sample average optimization.

When $F(x,\xi)$ is a polynomial in $x$, the sample average approximation
\reff{SAA} is also a polynomial optimization problem.
The Lasserre type Moment-SOS relaxations
can be applied to solve it. However,
the following concerns need to be addressed:

\bit

\item For given samples $\xi^{(1)},\ldots, \xi^{(N)}$,
the optimizer set $S_N$ of \reff{SAA}
may (or may not) be far away from the optimizer set $S$ of \reff{eq:original}.
For instance, it is possible that \reff{eq:original} is bounded from below
and has a global minimizer, while \reff{SAA} is unbounded from below
and has no global minimizers.

\item The sample average $f_N(x)$ is only an approximation
for the objective $f(x)$.
Usually, the optimizer sets of \reff{eq:original} and \reff{SAA}
are not exactly same.
Therefore, the sample average approximation \reff{SAA}
does not need to be solved exactly.
However, we still expect that the optimizer set $S_N$ of \reff{SAA}
(if it is nonempty)
is a good approximation for the optimizer set $S$ of \reff{eq:original},
though they might be very different.
Generally, the optimizer set $S$ can not be determined exactly,
unless the objective $f(x)$ can be determined exactly.

\eit

To address the above concerns, we propose the following
perturbation sample average approximation (PSAA) model
\be \label{PSAA:pop}
\left\{ \baray{rl}
\min & f_N(x) + \eps   \| [x]_{2d} \| \\
\st & g_1(x)\geq 0, \ldots, g_m(x) \geq 0,
\earay \right.
\ee
where $d$ is the smallest integer such that
\[
2d \geq \max\{ \deg(f_N), \deg(g_1), \ldots, \deg(g_m) \}
\]
and $[x]_{2d}$ is the vector of monomials in $x$
and with degrees at most $2d$ (see \reff{vec:[x]d}).
The norm $\| \cdot \|$ is the standard Euclidean norm.
We use the Lasserre type Moment-SOS relaxation of degree $2d$
for solving \reff{PSAA:pop}.
A small parameter $\eps >0$ is often selected, for \reff{PSAA:pop}
to approximate \reff{eq:original} well.
In this article, we discuss properties of \reff{PSAA:pop},
as well as its Moment-SOS relaxations.
The perturbation term $\eps \| [x]_{2d} \|$ plays an important role
in sample average approximation.
The paper is organized as follows. We review some basics
for polynomial optimization in Section~\ref{sc:pre}.
The properties of the perturbed sample average optimization \reff{PSAA:pop}
are discussed in Section~\ref{sc:saopt}.
The numerical experiments are given in Section~\ref{sc:num}.

\section{Preliminaries}
\label{sc:pre}

\subsection*{Notation}

The symbols $\mathbb{N},\mathbb{R}$ denote the set of nonnegative integers
and real numbers, respectively. For given $x\in \mathbb{R}^n$ and a real scalar
$r>0$, $B(x,r)$ denotes the closed ball in $\mathbb{R}^n$ centered at $x$ with radius $r$,
under the standard Euclidean norm.
For a real symmetric matrix $X$, we write $X \succeq 0$ (resp., $X \succ 0$)
by meaning that $X$ is positive semidefinite (resp., positive definite).
The symbol $\re[x] := \re[x_1, \ldots, x_n]$ denotes the ring of polynomials
with real coefficients and in $x :=(x_1, \ldots, x_n)$.
For a polynomial $f$, $\deg (f)$ refers to its total degree.
For a tuple of polynomials $p = (p_1, p_2,\ldots, p_m)$,
$\deg (p)$ refers to the maximum of the degrees of $p_i$.
For a degree $d$, $\mathbb{R}[x]_d$ stands for the space of all real polynomials
in $x$ and of degrees no more than $d$.
For a nonnegative integer vector $\alpha=(\alpha_1,...,\alpha_n) \in \N^n$, denote
\[
|\alpha|:=\alpha_1+\cdots+\alpha_n, \quad
x^{\alpha}:=x_1^{\alpha_1}\cdots x_n^{\alpha_n}.
\]
For convenience, denote the monomial power set
\[
\N_d^n:=\{\alpha \in \mathbb{N}^n:\, |\alpha| \leq d
\}.
\]
For a degree $k$, $[x]_k$ denotes the vector of all monomials of
degrees at most $k$, ordered in the graded lexicographic ordering, i.e.,
\be \label{vec:[x]d}
[x]_k:=\bbm 1 & x_1 & \cdots & x_n &
x_1^2 & x_1x_2 & \cdots & x_n^k \ebm^T.
\ee
(The superscript $^T$ denotes the transpose of a vector or matrix.)
For $t\in \mathbb{R}$, $\ceil{t}$ denotes the smallest integer
greater than or equal to $t$.
For a function $p(\xi)$ in a random vector $\xi$,
$\bE [p(\xi)]$ stands for the expectation of $p(\xi)$,
with respect to the distribution of $\xi$.

A polynomial $\sigma$ is said to be a sum of squares (SOS)
if $\sigma = s_1^2+s_2^2+\cdots+s_k^2$,
for some $k \in \N$ and polynomials $s_1,s_2,...,s_k \in \re[x]$.
Clearly, if $\sigma$ is SOS and has degree $2d$,
then each $s_i$ must have degree at most $d$.
We use $\Sigma[x]$ to denote the cone of all SOS polynomials,
and $\Sigma[x]_{2d}$ to denote the truncation of
SOS polynomials in $\re[x]_{2d}$.
Checking whether a polynomial is SOS or not
can be done by solving a semidefintie program \cite{Las01,ParMP}.

For a tuple $g := (g_1, \ldots, g_m)$ of polynomials in $\re[x]$,
its quadratic module is the set
\[
Q(g):=\Sigma[x]+g_1\cdot \Sigma[x]+\cdots+g_m\cdot\Sigma[x].
\]
The $2d$-th truncation of $Q(g)$ is
\[
Q(g)_{2d}:=\Sigma[x]_{2d}+g_1\cdot\Sigma[x]_{2d-\text{deg}(g_1)}
+\cdots+g_m\cdot\Sigma[x]_{2d-\text{deg}(g_m)}.
\]
It clearly holds the nesting relation of containment
\[
 \cdots \subseteq Q(g)_{2d} \subseteq
Q(g)_{2d+2} \subseteq \cdots  Q(g).
\]
Indeed, each $Q(g)_{2d}$ is a convex cone of the space $\re[x]_{2d}$.
The tuple $g$ determines the semialgebraic set in \reff{set:K:g>=0}.
Obviously, if $f \in Q(g)$, then $f \geq 0$ on $K$. To ensure $f \in Q(g)$,
we often require $f > 0$ on $K$.
The quadratic module $Q(g)$ is said to be {\it archimedean}
if there exists a single polynomial $p \in Q(g)$ such that
the inequality $p(x) \geq 0$ defines a compact set in $\re^n$.
If $Q(g)$ is archimedean, then the set $K$ must be compact.
The converse is not necessarily true. However, if
$K$ is compact (say, $K \subseteq B(0,R)$ for some radius $R$),
one can always enforce $Q(g)$ to be archimedean
by adding the redundant polynomial $R^2- \| x \|^2$ to the tuple $g$.
When $Q(g)$ is archimedean, if $f >0$ on $K$,
then we must have $f \in Q(g)$. This conclusion is referred to
Putinar's Positivstellensatz, which was shown in \cite{Put}.
Interestingly, when $f \geq 0$ on $K$,
we still have $f \in Q(g)$, under some optimality conditions \cite{Nie-opcd}.

For a given dimension $n$ and degree $d$, denote by
$\re^{\N_d^n}$ the space of real vectors
that are indexed by $\af \in \N^n_d$, i.e.,
\[
\re^{\N_d^n} \, :=\, \{ y = (y_\af)_{ \af \in \N_d^n } : \, y_\af \in \re \}.
\]
Each vector in $\re^{\N_d^n}$ is called a
truncated multi-sequence (tms) of degree $d$.
A tms $y \in \re^{\N_d^n}$ gives the linear functional
$\mathscr{R}_y$ acting on $\re[x]_d$ as
\be
\mathscr{R}_y\Big({\sum}_{\af \in \N_d^n}
f_\af x^\af  \Big) := {\sum}_{\af \in \N_d^n}  f_\af y_\af.
\ee
The $\mathscr{R}_y$ is called a Riesz functional.
For $f \in \re[x]_d$ and $y \in \re^{\N_d^n}$, we denote
\be \label{df:<p,y>}
\langle f, y \rangle := \mathscr{R}_y(f).
\ee
The tms $y \in \re^{\N_d^n}$ is said to admit a representing measure
supported in a set $T \subseteq \re^n$
if there exists a Borel measure $\mu$, supported in $T$,
such that $y_\af = \int  x^\af \mt{d} \mu$ for all $\af \in \N_d^n$.
This is equivalent to that
$\langle f, y \rangle = \int f(x) \mt{d} \mu$
for all $f \in \re[x]_d$.
Such $\mu$ is called a $T$-representing measure for $y$.
We refer to \cite{CurFia05,HN12,NieATKMP} for recent work
on truncated moment problems.

For a polynomial $p \in \re[x]_{2d}$, the $d$th {\it localizing matrix} of $p$
associated to a tms $y \in \re^{\N^n_{2d}}$,
is the symmetric matrix $L_p^{(d)}[y]$ such that
\be  \label{LocMat}
vec(a)^T \Big( L_p^{(d)}[y] \Big) vec(b)  = \mathscr{R}_y(p a b)
\ee
for all polynomials $a, b \in \re[x]_t$,
with $t =d - \lceil \deg(p)/2 \rceil$.
In the above, the $vec(a)$ denotes the coefficient vector of the polynomial $a$.
For instance, when $n=3$ and $p= x_1x_2-x_3^3$, for $y\in \re^{\N^3_6}$, we have
\[
L_p^{(3)}[y] = \begin{bmatrix*}[r]
y_{110}-y_{003}  &  y_{210}-y_{103}  &  y_{120}-y_{013}  &  y_{111}-y_{004} \\
y_{210}-y_{103}  &  y_{310}-y_{203}  &  y_{220}-y_{113}  &  y_{211}-y_{104} \\
y_{120}-y_{013}  &  y_{220}-y_{113}  &  y_{130}-y_{023}  &  y_{121}-y_{014} \\
y_{111}-y_{004}  &  y_{211}-y_{104}  &  y_{121}-y_{014}  &  y_{112}-y_{005} \\
\end{bmatrix*}.
\]
For the special case of constant one polynomial $p = 1$,
$L_1^{(d)}[y]$ is reduced to the so-called {\it moment matrix}
\be \label{MomMat}
M_d[y] \, := \, L_{1}^{(d)}[y].
\ee
The columns and rows of $L_p^{(d)}[y]$, as well as $M_d[y]$,
are labelled by $\af \in \N^n$ with $2|\af| + \deg(p) \leq 2d$.
For instance, for $n=3$ and $y \in \re^{\N_4^3}$, we have
\[
M_2[y]=
\begin{bmatrix*}[r]
y_{000} & y_{100}  & y_{010} & y_{001} & y_{200} & y_{110} & y_{101} & y_{020} & y_{011} & y_{002} \\
y_{100} & y_{200}  & y_{110} & y_{101} & y_{300} & y_{210} & y_{201} & y_{120} & y_{111} & y_{102} \\
y_{010} & y_{110}  & y_{020} & y_{011} & y_{210} & y_{120} & y_{111} & y_{030} & y_{021} & y_{012} \\
y_{001} & y_{101}  & y_{011} & y_{002} & y_{201} & y_{111} & y_{102} & y_{021} & y_{012} & y_{003} \\
y_{200} & y_{300}  & y_{210} & y_{201} & y_{400} & y_{310} & y_{301} & y_{220} & y_{211} & y_{202} \\
y_{110} & y_{210}  & y_{120} & y_{111} & y_{310} & y_{220} & y_{211} & y_{130} & y_{121} & y_{112} \\
y_{101} & y_{201}  & y_{111} & y_{102} & y_{301} & y_{211} & y_{202} & y_{121} & y_{112} & y_{103} \\
y_{020} & y_{120}  & y_{030} & y_{021} & y_{220} & y_{130} & y_{121} & y_{040} & y_{031} & y_{022} \\
y_{011} & y_{111}  & y_{021} & y_{012} & y_{211} & y_{121} & y_{112} & y_{031} & y_{022} & y_{013} \\
y_{002} & y_{102}  & y_{012} & y_{003} & y_{202} & y_{112} & y_{103} & y_{022} & y_{013} & y_{004} \\
\end{bmatrix*}.
\]

Suppose $g := (g_1, \ldots, g_m)$ is a tuple of polynomials in $\re[x]_{2d}$.
Consider the cone of tms of degree $2d$
\be \label{mom:S(g):2d}
\mathscr{S}(g)_{2d} :=
\Big \{
\left. y \in \re^{ \N^n_{2d} } \right|
M_d[y] \succeq 0, \,  L_{g_1}^{(d)}[y] \succeq 0, \ldots,
L_{g_m}^{(d)}[y] \succeq 0
\Big \}.
\ee
It is a closed convex cone in $\re^{ \N^n_{2d} }$.
Consider the projection map:
\be \label{map:pi}
\pi: \re^{ \N^n_{2d} } \to \re^n, \quad
y \mapsto u=(y_{e_1}, \ldots, y_{e_n}).
\ee
Let $K$ be the semialgebraic set as in \reff{set:K:g>=0}, then
\be \label{K:subset:pi}
K \subseteq \pi \Big( \mathscr{S}(g)_{2d} \cap \{ y_0 = 1\} \Big).
\ee
This is because for each $u \in K$, the tms
$y :=[u]_{2d}$ belongs to $\mathscr{S}(g)_{2d}$ and $\pi(y) = u$.
Therefore, the linear section $\{ y_0 = 1\}$
of the cone $\mathscr{S}(g)_{2d}$ is a lifted convex relaxation of the set $K$.
The cone $\mathscr{S}(g)_{2d}$ and the quadratic module
$Q(g)_{2d}$ are dual to each other. This is because
$\langle f, y \rangle \geq 0$ for all
$f \in Q_{2d}(g)$ and $y \in \mathscr{S}(g)_{2d}$.
We refer to \cite{LasBk15,Lau09,Nie-LinOpt}
for these basic properties.
Interestingly, the containment in \reff{K:subset:pi} is an equality
when the polynomials $g_i$ are SOS-concave \cite{HelNie10}.

There exists much work for polynomial optimization.
The Lasserre type Moment-SOS hierarchy of
relaxations were introduced in \cite{Las01}.
The Moment-SOS hierarchy was proved to have finite convergence
under some optimality conditions~\cite{dKlLau11,Las09,Nie-opcd}.
The flat extension or flat truncation condition
can be used to certify its convergence \cite{CurFia05,HenLas05,Nie-ft}.
For unconstrained optimization, the performance of
the standard SOS relaxations was studied in \cite{ParMP,PS03}.
For the special case of finite feasible sets,
the convergence was studied in \cite{LLR08,Lau07,NieReVar}.
We refer to \cite{BPT13,LasBk15,Lau09,Scheid09}
for more detailed introductions to polynomial optimization.

\section{The sample average optimization}
\label{sc:saopt}

Let $\xi^{(1)}, \ldots, \xi^{(N)}$ be given samples
for the random vector $\xi$. Consider the sample average function
\[
f_N(x) \, := \, \frac{1}{N} \sum_{k=1}^N F(x, \xi^{(k)}).
\]
When $F(x,\xi)$ is a polynomial in $x \in \re^n$, $f_N(x)$ is also a polynomial in $x$.
We assume that the feasible set $K$ is given as in \reff{set:K:g>=0},
for a tuple $g:=(g_1,\ldots, g_m)$ of polynomials.
Let $d$ be the degree:
\be \label{deg:d}
d = \Big \lceil \frac{1}{2} \max\{\deg(f_N),
\deg(g_1), \ldots, \deg(g_m)\} \Big \rceil.
\ee
Instead of solving \reff{SAA} directly,
we propose to solve the sample average optimization with perturbation
\be \label{pop:SAAR}
\left\{ \baray{rl}
\min & f_N(x) + \eps   \| [x]_{2d} \| \\
\st & g_1(x)\geq 0, \ldots, g_m(x) \geq 0,
\earay \right.
\ee
for a small parameter $\eps > 0$.
The Lasserre type moment relaxation can be applied to solve \reff{pop:SAAR}.
Recall the notation $[x]_{2d}$, $M_d[y]$,
$L_{g_i}^{(d)}[y]$ as in Section~\ref{sc:pre}. Observe that
\[
f_N(x) = \langle f_N, [x]_{2d} \rangle, \quad
M_d\big[ [x]_{2d} \big] \succeq 0, \quad
L_{g_i}^{(d)} \big[ [x]_{2d} \big] \succeq 0
\]
for all $x\in K$ and all $i=1,\ldots,m$.
If we replace the monomial vector $[x]_{2d}$ by a tms
$y \in \re^{ \N^n_{2d} }$, then \reff{pop:SAAR}
is relaxed to the following convex optimization
\be \label{PSAA:moment}
\left\{ \baray{rl}
\min & \langle f_N,y\rangle+\epsilon \|  y\|\\
\st & M_d[y] \succeq 0, \, L_{g_i}^{(d)} [ y] \succeq 0 \,(i=1,\ldots,m), \\
    & y_0=1, \, y \in \re^{ \N^n_{2d} }.
\earay \right.
\ee
It is a semidefinite program, with a norm function in the objective.
The relaxation \reff{PSAA:moment} is said to be {\it tight} if its optimal value
is the same as that of (\ref{pop:SAAR}).
In this paper, we choose $\|\cdot\|$ to be the standard Euclidean norm,
but any other kind of vector norms can also be used.
The equality constraint $y_0=1$ means that the first entry of $y$ is equal to one.
The set of all $y$ satisfying linear matrix inequalities in \reff{PSAA:moment}
is just the cone $\mathscr{S}(g)_{2d}$, defined as in \reff{mom:S(g):2d}.
The cone $\mathscr{S}(g)_{2d}$ and the truncated quadratic module $Q(g)_{2d}$
are dual to each other.
Therefore, the Lagrange function for \reff{PSAA:moment} is
\[
\baray{rcl}
\mathcal{L}(y, q, \gamma) & = &  \langle f_N,y\rangle+\epsilon \|  y\|
- \langle q, y \rangle - \gamma (y_0-1) \\
& = & \langle f_N -q-\gamma,y\rangle+\epsilon \|  y\| + \gamma,
\earay
\]
for dual variables $q \in Q(g)_{2d}$ and $\gamma \in \re$.
The function $\mathcal{L}(y, q, \gamma)$ has a finite minimum value
for $y \in \re^{ \N^n_{2d} }$ if and only if
\[
\| vec( f_N -q-\gamma ) \| \leq \epsilon,
\]
for which case the minimum value is $\gamma$.
(The $vec(p)$ denotes the coefficient vector of $p$.)
Therefore, the dual optimization problem of \reff{PSAA:moment} is
\be \label{PSAA:sos}
\left\{ \baray{rl}
\max & \gamma \\
\st & f_N - p - \gamma \, \in Q(g)_{2d},  \\
    & \| vec(p) \|\leq \epsilon, p \in \re[x]_{2d}.
\earay \right.
\ee
Because the sample average $f_N(x)$ is only an approximation for $f(x)$,
it is possible that there is no scalar $\gamma$ such that
$f_N - \gamma \in Q(g)_{2d}$. The perturbation term
$\epsilon \| y\|$ in \reff{PSAA:moment} motivates us to
find the maximum $\gamma$ such that
$f_N - p - \gamma \in Q(g)_{2d}$,
for some polynomial $p$ whose coefficient vector has a small norm.
This leads to the following algorithm.

\begin{alg} \label{alg:PSAA}
Generate samples $\xi^{(1)},\ldots,\xi^{(N)}$,
according to the distribution of $\xi$.
Choose a small perturbation parameter $\eps > 0$.

\bit

\item [Step~1] Compute the sample average $f_N = N^{-1}\sum_{k=1}^NF(x,\xi^{(k)})$.

\item [Step~2] Solve the semidefinite relaxation problem \reff{PSAA:moment}.
If \reff{PSAA:moment} is infeasible, increase the value of $\eps$
(e.g., let $\eps := 2 \eps$), until \reff{PSAA:moment} has a minimizer,
which we denote as $y^*$.

\item [Step~3]
Let $u = \pi (y^*)$, where $\pi$ is the projection map in \reff{map:pi},
or equivalently, let
\[
u = (y^*_{e_1}, \ldots, y^*_{e_n} ).
\]
Output $u$ as a candidate minimizer for the
sample average optimization with perturbation \reff{pop:SAAR}, and stop.

\eit

\end{alg}

For $\eps >0$, the minimizer of the relaxation~\reff{PSAA:moment}
is always unique (if it exists), because its
objective is strictly convex.
Our numerical experiments demonstrate that
Algorithm~\ref{alg:PSAA} is efficient for solving \reff{pop:SAAR}.

\begin{theorem}
Assume that $u^*$ is a minimizer of (\ref{pop:SAAR})
and $y^*$ is a minimizer of \reff{PSAA:moment}.
Then, for $\eps >0$, the relaxation (\ref{PSAA:moment})
is tight if and only if $\rank\, M_d[y^*]=1$.
In particular, for the case $\rank\, M_d[y^*]=1$,
the point $u = \pi(y^*)$ is a minimizer of \reff{pop:SAAR}.
\end{theorem}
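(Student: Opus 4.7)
The plan is to exploit the moment-to-measure correspondence together with a uniqueness argument for the minimizer of (\ref{PSAA:moment}). The key preliminary observation is that for every $x \in K$, the moment vector $[x]_{2d}$ lies in the feasible set of (\ref{PSAA:moment}) (with $y_0 = 1$) and achieves the objective value $f_N(x) + \eps \| [x]_{2d} \|$. Thus the optimal value of (\ref{PSAA:moment}) is at most that of (\ref{pop:SAAR}), and tightness means this inequality is an equality.

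For the direction \emph{rank one implies tight}, I would note that $y_0^* = 1$ and $\mathrm{rank}\, M_d[y^*] = 1$, together with $M_d[y^*] \succeq 0$, force the factorization $M_d[y^*] = [u]_d [u]_d^T$ with $u = \pi(y^*)$; consequently $y^* = [u]_{2d}$. Each localizing matrix then reduces to $L_{g_i}^{(d)}[y^*] = g_i(u)\cdot [u]_{t_i}[u]_{t_i}^T$ with $t_i = d - \lceil \deg(g_i)/2\rceil$, and positive semidefiniteness combined with the nonzero entry forces $g_i(u) \geq 0$, giving $u \in K$. The relaxation value at $y^*$ then equals $f_N(u) + \eps\|[u]_{2d}\|$, which is an upper bound on the primal optimum through the feasible point $u$. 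Chaining the two inequalities yields tightness and simultaneously shows $u$ is a minimizer of (\ref{pop:SAAR}), which establishes the ``In particular'' clause.

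For the converse, the central step is uniqueness of the minimizer $y^*$ of (\ref{PSAA:moment}). If $y^{(1)} \neq y^{(2)}$ were both optimal, then since the linear part $\langle f_N, \cdot\rangle$ is affine and equal on both, optimality of the midpoint would force the strict triangle inequality to hold with equality for $\|\cdot\|$, i.e., $y^{(1)}$ and $y^{(2)}$ must be nonnegative scalar multiples of one another. The normalization $y_0 = 1$ then forces $y^{(1)} = y^{(2)}$. Using this uniqueness, if the relaxation is tight, pick any primal minimizer $u^*$ and let $\hat y := [u^*]_{2d}$; by the preliminary observation $\hat y$ is feasible with objective value equal to the common optimum, so $\hat y$ is itself optimal for (\ref{PSAA:moment}). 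Uniqueness forces $y^* = \hat y = [u^*]_{2d}$, whence $M_d[y^*] = [u^*]_d[u^*]_d^T$ has rank one.

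The main obstacle is the uniqueness step: the Euclidean norm $\|\cdot\|$ is not strictly convex along rays through the origin, so standard ``strictly convex objective implies unique minimizer'' is not literally applicable in $\re^{\N^n_{2d}}$. The argument has to use the affine constraint $y_0 = 1$ in an essential way, since that is precisely what rules out the single direction along which $\|\cdot\|$ fails to be strictly convex. Once that subtlety is handled, everything else is a direct consequence of the rank-one/Dirac-measure dictionary and the feasibility computation for $[u]_{2d}$.
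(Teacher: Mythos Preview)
Your proposal is correct and follows essentially the same route as the paper's proof: rank one $\Rightarrow$ $y^* = [u]_{2d}$ with $u \in K$ $\Rightarrow$ tightness for the ``$\Leftarrow$'' direction, and tightness $\Rightarrow$ $[u^*]_{2d}$ is optimal $\Rightarrow$ uniqueness forces $y^* = [u^*]_{2d}$ for the ``$\Rightarrow$'' direction. The only difference is that the paper simply asserts ``the objective of (\ref{PSAA:moment}) is strictly convex,'' while you correctly isolate the subtlety that $\|\cdot\|$ fails strict convexity along rays and then use the normalization $y_0 = 1$ to rule that degeneracy out; your treatment of this point is in fact more careful than the paper's.
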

\begin{proof}
Let $\vartheta_1, \vartheta_2$ be optimal values of (\ref{pop:SAAR})
and (\ref{PSAA:moment}) respectively.

``$\Leftarrow$''
It is clear that $\vartheta_1 \geq \vartheta_2$. If $rank M_d[y^*]=1$,
then for $u = \pi(y^*)$ one can show that
$M_d[y^*] = [u]_d ([u]_d)^T$. Hence, $y^* =[u]_{2d}$,
$\langle f_N, y^*\rangle = f_N(u)$, and each $g_i(u) \geq 0$
(see \cite{HenLas05,Nie-ft}).
So, $u$ is a feasible point of (\ref{pop:SAAR}) and
\[
\vartheta_1\leq f_N(u)+\epsilon\|[u]_{2d}\| =
\langle f_N, y^*\rangle+\epsilon \|y^*\| = \vartheta_2.
\]
Therefore, $\vartheta_1=\vartheta_2$, $u$ is a minimizer of (\ref{pop:SAAR}),
and the relaxation (\ref{PSAA:moment}) is tight.

``$\Rightarrow$'' Let $\tilde{y} :=[u^*]_{2d}$, then
$f_N(u^*)=\langle f_N,\tilde{y}\rangle$ and $\| [u^*]_{2d} \| = \| \tilde{y} \|$.
If the relaxation (\ref{PSAA:moment}) is tight,
then $\vartheta_1=\vartheta_2$ and $\tilde{y}$ is a minimizer of (\ref{PSAA:moment}).
For $\eps > 0$, the objective of (\ref{PSAA:moment}) is strictly convex,
so its minimizer must be unique.
Hence, $\tilde{y} = y^*$ and
\[
M_d[y^*] \, = \,  M_d[\tilde{y}]  \, = \, [u^*]_{d} ([u^*]_{d})^T.
\]
Therefore, $\rank \, M_d[y^*] = \rank \, M_d[\tilde{y}] = 1$.
\end{proof}

When the sample average $f_N(x)$ is unbounded from below
on the feasible set $K$,
the moment relaxation \reff{PSAA:moment} might still be
unbounded from below if $\eps >0$ is small.
However, if $\eps >0$ is big,
then \reff{PSAA:moment} must be feasible and has a minimizer.
Indeed, we have the following theorem.

\begin{theorem}
Suppose the feasible set $K$ has nonempty interior.
If $\eps>0$ is big, both \reff{PSAA:moment} and \reff{PSAA:sos}
have optimizers and their optimal values are the same.
\end{theorem}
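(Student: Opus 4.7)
My plan is to verify Slater's condition for \reff{PSAA:moment} using the nonempty interior of $K$, show that the norm term forces coercivity once $\eps$ is sufficiently large, and then invoke standard conic duality. The paper has already derived \reff{PSAA:sos} as the Lagrange dual of \reff{PSAA:moment}; equivalently, epigraphing the norm via $t\ge\|y\|$ recasts \reff{PSAA:moment} as a linear conic program over an SDP--SOCP product, whose conic dual is precisely \reff{PSAA:sos}. So these ingredients together should give attainment on both sides with equal optimal values.

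For primal Slater, I would pick $u^*\in\mathrm{int}(K)$ and a radius $r>0$ small enough that $B(u^*,r)\subseteq K$ and every $g_i$ is strictly positive on $B(u^*,r)$; let $\mu$ be the uniform probability measure on $B(u^*,r)$ and set $y^\circ:=\int [x]_{2d}\,\mathrm{d}\mu$. Then $y^\circ_0=1$, and for every nonzero $a\in\re[x]_d$,
\[
\mathrm{vec}(a)^T M_d[y^\circ]\,\mathrm{vec}(a)=\int a(x)^2\,\mathrm{d}\mu>0,\qquad
\mathrm{vec}(a)^T L_{g_i}^{(d)}[y^\circ]\,\mathrm{vec}(a)=\int a(x)^2 g_i(x)\,\mathrm{d}\mu>0,
\]
so $M_d[y^\circ]\succ 0$ and each $L_{g_i}^{(d)}[y^\circ]\succ 0$. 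Hence $y^\circ$ is a strictly feasible (Slater) point of \reff{PSAA:moment}.

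For any $\eps>\|\mathrm{vec}(f_N)\|$, Cauchy--Schwarz yields
\[
\langle f_N,y\rangle+\eps\|y\|\ \ge\ \bigl(\eps-\|\mathrm{vec}(f_N)\|\bigr)\|y\|\ \ge\ 0,
\]
which bounds the primal below and forces all of its sublevel sets to be bounded in $\re^{\N^n_{2d}}$. Since $\mathscr{S}(g)_{2d}\cap\{y_0=1\}$ is closed and the objective is continuous and coercive, the primal attains its minimum. Combining primal Slater with a finite primal value, the standard conic strong-duality theorem then delivers that \reff{PSAA:sos} is solvable with the same optimal value as \reff{PSAA:moment}.

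The main obstacle is the first step: verifying \emph{strict} positive-definiteness of $M_d[y^\circ]$ and of every $L_{g_i}^{(d)}[y^\circ]$, not just feasibility. This is precisely where the hypothesis $\mathrm{int}(K)\ne\emptyset$ is essential, since a measure supported on any lower-dimensional piece of $K$ would leave $M_d[y^\circ]$ rank-deficient on the ideal of polynomials vanishing on its support, and Slater would fail.
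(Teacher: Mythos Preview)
Your proof is correct and closely parallels the paper's argument. Both construct a primal Slater point by integrating $[x]_{2d}$ against a probability measure with full-dimensional support inside $K$ (you use the uniform measure on a small ball, the paper uses the normalized Gaussian measure restricted to $K$), and both finish by invoking conic strong duality. The one substantive difference is in securing \emph{primal} attainment: you argue directly that for $\eps>\|vec(f_N)\|$ the objective is coercive on the closed feasible set, whereas the paper instead exhibits a strictly feasible \emph{dual} point---taking $\hat p=f_N-[x]_d^T[x]_d$ and $\hat\gamma=0$, which is admissible once $\eps>\|f_N-[x]_d^T[x]_d\|$ since $[x]_d^T[x]_d$ lies in the interior of $\Sigma[x]_{2d}\subseteq Q(g)_{2d}$---and then appeals to strong duality from both sides simultaneously. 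Your coercivity route is slightly more elementary and gives primal attainment without touching the dual; the paper's route is more symmetric and makes explicit what ``big $\eps$'' buys on the dual side.
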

\begin{proof}
When $K$ has nonempty interior, the quadratic module
$Q(g)_{2d}$ is a closed cone (see \cite[Theorem~3.49]{Lau09})
and the cone $\mathscr{S}(g)_{2d}$ has nonempty interior.
For instance, let $\nu$ be the Gaussian measure, then the tms
\[
\hat{y} \, := \, \frac{1}{\nu(K)} \int_K [x]_{2d} \mt{d} \nu(x)
\]
is an interior point of the cone $\mathscr{S}(g)_{2d}$. In other words,
$M_d[\hat{y}] \succ 0$ and all $L_{g_i}^{(d)}[\hat{y}] \succ 0$.
This is because $\int_K p^2d\nu>0$ and $\int_K g_ip^2d\nu>0$
for all nonzero polynomials $p$. Moreover, $\hat{y}_0 = 1$.
The convex relaxation \reff{PSAA:moment} is strictly feasible
(i.e., there is a feasible $y$ such that
each matrix in (\ref{PSAA:moment}) is positive definite).
When $\eps > 0$ is big, the SOS relaxation \reff{PSAA:sos}
is also strictly feasible. For instance, for the choice
\[
\eps > \| f_N - [x]_d^T [x]_d \|, \,
\hat{p} = f_N - [x]_d^T [x]_d, \, \hat{\gamma} = 0,
\]
we have that
\[
f_N - \hat{p} - \hat{\gamma} = [x]_d^T [x]_d
\in \mbox{int} \, \Big( \Sigma[x]_{2d} \Big)
\subseteq \mbox{int} \, \Big(  Q(g)_{2d} \Big) .
\]
In the above, $\mbox{int}$ denotes the interior of a set.
Therefore, for big $\eps > 0$, both \reff{PSAA:moment}
and \reff{PSAA:sos} have strictly feasible points.
By the strong duality theorem (see \cite{BTN01,BoyVanBk}),
they have the same optimal value
and they both achieve the optimal value,
i.e., they have optimizers.
\end{proof}

In applications, however, we often choose a small $\eps > 0$,
because we expect that \reff{pop:SAAR} is a good approximation for \reff{SAA}.
In (\ref{PSAA:moment}), the value of $\epsilon$
affects the performance of \reff{PSAA:moment}.
When $\epsilon > 0$ is too small, \reff{PSAA:moment} might be unbounded
from below and has no minimizers. If $\epsilon > 0$ is big,
\reff{PSAA:moment} might give a loose approximation for \reff{SAA}.
For efficiency, we often anticipate the smallest value of $\eps$
such that \reff{PSAA:moment} is bounded from below and has a minimizer.
When $K$ has nonempty interior, the relaxation~\reff{PSAA:moment}
is strictly feasible, i.e.,
there exists $\hat{y}$ such that all the matrices
$M_d [\hat{y}]$ and $L_{g_i}^{(d)}[\hat{y}]$ are positive definite.
Therefore, the strong duality holds between
\reff{PSAA:moment} and \reff{PSAA:sos}.
To ensure that \reff{PSAA:moment} is solvable (i.e., it has a minimizer),
the dual optimization problem \reff{PSAA:sos} needs to be feasible.
Consider the optimization problem
\be \label{min||p||}
\left\{ \baray{rl}
\eps^* :=\min & \| vec(p) \| \\
\st & f_N- p - \gamma \, \in Q(g)_{2d},  \\
    &  \gamma \in \re, \,  p \in \re[x]_{2d}.
\earay \right.
\ee
The above is a convex optimization problem with semidefinite constraints.
In computational practice, we often choose $\eps >0$
in a heuristic way, e.g., $\eps = 10^{-2}$.
If such $\eps$ is not enough, we can increase its value
until \reff{PSAA:moment} performs well.

\section{Numerical Experiments}
\label{sc:num}

This section gives numerical experiments of applying Algorithm~\ref{alg:PSAA}
to solve stochastic polynomial optimization.
The computation is implemented in MATLAB R2018a,
in a Laptop with CPU 8th Generation Intel® Core™ i5-8250U and RAM 16 GB.
The moment relaxation \reff{PSAA:moment}
is solved by the software {\tt GloptiPoly~3} \cite{GloPol3},
which calls the semidefinite program solver {\tt SeDuMi} \cite{sedumi}.
The computational results are displayed with four decimal digits.
We use ``PSAA'' to denote the perturbation sample average approximation
model \reff{pop:SAAR}. Its relaxation \reff{PSAA:moment} is said to be \textit{solvable}
if it has a minimizer $y^*$. For such a case,
let $u$ be the point given in Step~3 of Algorithm~\ref{alg:PSAA}, i.e., $u = \pi(y^*)$.
Otherwise, \reff{PSAA:moment} is said to be \textit{not solvable}
and we use ``n.a.'' to indicate the relevant values are not available.
We use $f_{min}$ and $v^*$ to denote the optimal value and the minimizer of
(\ref{eq:original}) respectively. The symbol $\bar{\xi}$
stands for the sample average of the random vector $\xi\in\mathbb{R}^r$,
while $\bar{\xi}_i$ refers to its $i$th entry:
\[
\bar{\xi} = \frac{1}{N}\sum_{k=1}^N\xi^{(k)},\quad
\bar{\xi_i}=\frac{1}{N}\sum_{k=1}^N\xi_i^{(k)},\ i = 1,\cdots, r.
\]
In our numerical examples, we use the following classical
distributions for random variables (see \cite{chung2001course};
let $\delta_a$ denote the Dirac function supported at $a$):
\begin{itemize}
\item $Ber(p)$ denotes the \textit{Bernoullian distribution}
with success probability $p$, whose density function is $(1-p)\cdot \delta_0+p\cdot \delta_1$.

\item $Geo(p)$ denotes the \textit{geometric distribution}
with success probability $p$, whose density function is
$\sum_{n=0}^{\infty}(1-p)^n p \cdot\delta_n$.

\item $\mathcal{P}(\lambda)$ denotes the \textit{Poisson distribution}
with parameter $\lambda >0$, whose density function is $\sum_{n=0}^{\infty}e^{-\lambda}\frac{\lambda^n}{n!}\cdot\delta_n$.

\item $\mathcal{U}(a,b)$, with $a< b$,
denotes the \textit{uniform distribution} on $[a,b]$.

\item $\mathcal{N}(\mu,P)$ denotes the \textit{normal distribution}
with the expectation $\mu$ and covariance matrix $P$.

\end{itemize}

\begin{exmp}\label{exmp:PSAAsdp1}
Consider the stochastic optimization problem
\be \label{eq:PSAAsdp1}
\baray{rl}
\min\limits_{x \in \re^4} &  f(x) := \mathbb{E}[G(x)+H(x,\xi)] \\
\st & x_1 -1 \geq 0,\ x_2 -1/2 \geq 0,\ x_3 - 1/3 \geq 0,\ x_4 - 1/4 \geq 0,
\earay
\ee
where
\begin{align*}
    &G(x) = (x_1^2-2x_2^2)^2+x_3(2x_3^2-3x_1x_2+x_4^2)(x_4^2-3x_1x_2)-x_4x_3^3(2x_1^3-x_3^3), \\
    &H(x,\xi) = \xi_1x_3^5+\xi_2x_1^6x_4,\ \xi_1\sim \mathcal{U}(0,2),\ \xi_2\sim \mathcal{U}(0,2).
\end{align*}
We have $\mathbb{E}(\xi_1)=\mathbb{E}(\xi_2)=1$.
The constraining polynomial tuple is
\[ g = (x_1-1, x_2-1/2, x_3-1/3, x_4-1/4). \]
The feasible set $K$ is a polyhedron and the exact objective is
\[
f = (x_1^2-2x_2^2)^2+x_3(x_3^2-3x_1x_2+x_4^2)^2+x_4(x_1^3-x_3^3)^2 \in Q(g)_{8}.
\]
For the optimization \reff{eq:original}, the optimal value and the minimizer are
\[
f_{min} \,=\, 8.4455e-07,\quad
v^* \, = \,(1.0031,0.7093,1.0031,1.0622).
\]
Since the approximation polynomial $f_N$ is uniquely determined by
$\bar{\xi_1}$ and $\bar{\xi_2}$, we can pick some typical sample averages
to explore the performance of our optimization model
(\ref{pop:SAAR}) and its relaxation \reff{PSAA:moment}.
For instance, we consider the samples of $\xi$ such that
\[
|\bar{\xi_1}-\mathbb{E}(\xi_1)| \,=\, |\bar{\xi_2}-\mathbb{E}(\xi_2)| \, = \, 10^{-2}.
\]
There are four cases of the signs:
\[
\begin{array}{rl}
 \text{\rom{1}}: \bar{\xi}_1-\mathbb{E}(\xi_1)>0, \bar{\xi}_2-\mathbb{E}(\xi_2)>0,&
 \text{\rom{2}}: \bar{\xi}_1-\mathbb{E}(\xi_1)<0, \bar{\xi}_2-\mathbb{E}(\xi_2)>0,\\
\text{\rom{3}}: \bar{\xi}_1-\mathbb{E}(\xi_1)<0, \bar{\xi}_2-\mathbb{E}(\xi_2)<0, &
\text{\rom{4}}:\bar{\xi}_1-\mathbb{E}(\xi_1)>0, \bar{\xi}_2-\mathbb{E}(\xi_2)<0.
\end{array}
\]
We solve the optimization model (\ref{pop:SAAR}) for each case.
The computational results are reported in Table~\ref{tab:PSAAsdp1}.
The PSAA model (\ref{pop:SAAR})
has clear advantages for cases \rom{3} and \rom{4},
when they are compared to the unperturbed case (i.e, $\eps = 0$).
It gives reliable optimizers, while the classical SAA
model \reff{SAA} does not return good ones.
\begin{table}[htb]
\centering
\caption{Performance of PSAA for Example~\ref{exmp:PSAAsdp1}}
\label{tab:PSAAsdp1}
\begin{tabular}{|c|c|c|c|c|c|c|}
\hline
\multicolumn{2}{|c|}{$\epsilon$} & 0 & $10^{-4}$ & $10^{-3}$ & $10^{-2}$ & $10^{-1}$\\
\hline
\multirow{4}{*}{\rom{1}}
& solvable? & yes & yes & yes & yes & yes  \\ 
& time(sec.) & 1.13 & 1.34 & 1.23 & 1.23 & 1.23\\ 
& $|\langle f_N,y^*\rangle-f_N(u)|$ & 1.29e-06 & 6.41e-07 & 2.38e-07 &  1.30e-07 & 4.75e-08\\ 
& $|\langle f_N,y^*\rangle-f_{min}|$ & 2.05e-02 & 2.05e-02 & 2.07e-02 & 3.41e-02 & 3.33e-01\\
\hline
\multirow{4}{*}{\rom{2}}
& solvable? & yes & yes & yes & yes & yes  \\ 
& time(sec.) & 1.12 & 1.29 & 1.23 & 1.21 & 1.22 \\ 
& $|\langle f_N,y^*\rangle-f_N(u)|$ & 9.15e-07 & 6.41e-07 & 2.06e-07 & 1.27e-07 &  1.53e-08 \\ 
& $|\langle f_N,y^*\rangle-f_{min}|$ & 4.84e-04 & 4.86e-04 & 6.91e-04 & 1.38e-02 & 3.13e-01\\ \hline
\multirow{4}{*}{\rom{3}}
& solvable? & no & yes & yes & yes & yes\\ 
& time(sec.) & 2.14 & 2.78 & 1.40 &1.16 & 1.15 \\ 
& $|\langle f_N,y^*\rangle-f_N(u)|$ & n.a. & 1.13e+03 & 7.50e-08 & 1.25e-07& 8.22e-09\\ 
& $|\langle f_N,y^*\rangle-f_{min}|$ & n.a. & 7.14e+05 & 3.85e-02 & 7.00e-03 & 2.94e-01\\ \hline
\multirow{4}{*}{\rom{4}}
& solvable? & no & yes & yes & yes & yes\\ 
& time(sec.) & 2.19 & 2.79 & 1.40 &1.14 & 1.16 \\ 
& $|\langle f_N,y\rangle-f_N(u)|$ & n.a. & 1.35e+03 & 1.89e-07 & 1.26e-07& 1.54e-08\\ 
& $|\langle f_N,y\rangle-f_{min}|$ & n.a. & 6.52e+05 & 4.73e-04 & 1.32e-02& 3.14e-01\\ \hline
\end{tabular}
\end{table}
\end{exmp}

\begin{exmp}\label{exmp:BddSPO}
Consider the stochastic polynomial optimization
\begin{equation}\label{eq:BddSPO}
\begin{split}
\min_{x\in \mathbb{R}^4}\quad &  f(x) := \mathbb{E}[F(x)+H(x,\xi)] \\
        \text{s.t.}\quad
        &x_1x_3+1\geq x_2^2+x_4^2,\ x_2x_3-x_1x_4+2\geq 0, \\
        & x_1^3+x_2^3+x_3^3+x_4^3\leq 8, \\
        & x_1\geq 0,\ x_2\geq 0,\ x_3\geq 0,\ x_4\geq 0,
\end{split}
\end{equation}
where
\begin{align*}
    & G(x) = x_1^2x_2^2+x_2^2x_3^2+(1-x_2x_3)^2+(3-x_1x_4)^2+x_1x_2x_3x_4, \\
    & H(x,\xi)=\xi_1x_1x_2^2x_3+\xi_2x_2^2x_4^2, \\
    & \xi\sim \mathcal{N}\left(
    \begin{bmatrix}
    -0.41\\-2.50
    \end{bmatrix},
    \begin{bmatrix}
    1 & 0\\
    0 & 1
    \end{bmatrix}
    \right).
\end{align*}
If $f(x)$ is evaluated exactly,
the optimal value and the minimizer of \reff{eq:original} are
\[
f_{min} = 1.0655,\quad
v^* = (1.5829,\, 0.6427,\, 0.9316,\, 1.4358).
\]
As in Example~\ref{exmp:PSAAsdp1},
we explore the performance of \reff{pop:SAAR}-\reff{PSAA:moment}
for the following cases of samples:
\begin{align*}
\text{\rom{1}}:\ & \bar{\xi_1} = \mathbb{E}(\xi_1)-10^{-2},\ \bar{\xi_2} = \mathbb{E}(\xi_2)-10^{-2},\\
\text{\rom{2}}:\ & \bar{\xi_1}=\mathbb{E}(\xi_1)-10^{-2},\ \bar{\xi_2} = \mathbb{E}(\xi_2),\\
\text{\rom{3}}:\ & \bar{\xi_1} = \mathbb{E}(\xi_1),\ \bar{\xi_2} = \mathbb{E}(\xi_2)-10^{-2}.
\end{align*}
The computational results are reported in Table~\ref{tab:BddSPO}.
The PSAA model (\ref{pop:SAAR})
performs better than the classical SAA model \reff{SAA} (i.e. $\epsilon=0$).
For all these cases, (\ref{pop:SAAR}) gives more reliable optimizers.
Moreover, solving the relaxation \reff{PSAA:moment}
costs less computational time for cases \rom{1} and \rom{2}.

\begin{table}[htb]
\centering
\caption{Performance of PSAA for Example \ref{exmp:BddSPO}}
\label{tab:BddSPO}
\begin{tabular}{|c|c|c|c|c|c|}
\hline
\multicolumn{2}{|c|}{$\epsilon$} & 0 & $10^{-4}$ & $10^{-3}$ & $10^{-2}$\\ \hline
\multirow{4}{*}{\rom{1}}
& solvable? & yes  & yes & yes & yes\\ 
& time(sec.) & 0.37  & 0.29 & 0.15 & 0.26\\ 
& $|\langle f_N,y^*\rangle-f_N(u)|$ &  1.81e+05  & 1.29e-08& 5.28e-09& 9.53e-09\\ 
& $|\langle f_N,y^*\rangle-f_{min}|$ & 1.81e+05  & 1.48e-02 & 1.48e-02 & 1.47e-02 \\
\hline
\multirow{4}{*}{\rom{2}}
& solvable? & yes & yes& yes& yes\\ 
& time(sec.) & 0.23 & 0.12 & 0.14 & 0.12\\ 
& $|\langle f_N,y^*\rangle-f_N(u)|$ & 1.87e+05 & 1.29e-08 & 5.61e-09 & 9.64e-09\\ 
& $|\langle f_N,y^*\rangle-f_{min}|$ & 1.87e+05 & 6.13e-03 & 6.12e-03 & 6.12e-03\\
\hline
\multirow{4}{*}{\rom{3}}
& solvable? & yes & yes & yes & yes\\ 
& time(sec.) & 0.12 & 0.13 &0.10 & 0.09\\ 
& $|\langle f_N,y^*\rangle-f_N(u)|$ & 5.58e-02 & 1.29e-08& 5.61e-09& 9.70e-09\\ 
& $|\langle f_N,y^*\rangle-f_{min}|$ & 1.40e-02  & 8.56e-03 & 8.56e-03 & 8.55e-03  \\ \hline
\end{tabular}
\end{table}
\end{exmp}

\begin{exmp}\label{exmp:MoreEx1}
Consider the stochastic optimization
\be \label{eq:MoreEx1}
\begin{split}
\min_{x\in \mathbb{R}^2}\quad & \mathbb{E}\big\{F(x,\xi)=\xi_1 x_1^4x_2^2+\xi_2 x_1^2x_2^4-\xi_3 x_1x_2^3+\xi_4 x_1x_2\big\}\\
\text{s.t.}\quad & 0\leq x_1\leq 2,\ x_1+x_2\leq 4,\ x_1x_2\leq 8,
\end{split}
\ee
where
\[\xi\sim\mathcal{N}
\left(
\begin{bmatrix}
1\\1\\3\\1
\end{bmatrix},
\begin{bmatrix}
1 & 0.1 & 0.3 & 0.2\\
0.1 & 1 & 0.4 & 0.3\\
0.3 & 0.4 & 1 & 0.2\\
0.2 & 0.3 & 0.2 & 1
\end{bmatrix}\right).\]
The exact objective $f(x) = \mathbb{E}[F(x,\xi)]$
can be evaluated as
\[
f(x) = x_1^4 x_2^2+x_1^2 x_2^4-3x_1 x_2^3+x_1 x_2.
\]
For the optimization (\ref{eq:original}), its optimal value and minimizer are
\[
f_{min} = -27.8444,\quad v^* = (0.3442, \,3.6558).
\]
For convenience, we use ``$\bar{\xi}=\mathbb{E}(\xi)+o(10^{-3})$''
to denote a random sample average of size $N=1000$
with error in the order of $10^{-3}$. Consider two cases:
\[
\text{\rom{1}}: \bar{\xi}=\mathbb{E}(\xi),\quad
\text{\rom{2}}:\ \bar{\xi}=\mathbb{E}(\xi)+o(10^{-3}).
\]
The numerical results are reported in Table~\ref{tab:MoreEx1}.
The PSAA model (\ref{pop:SAAR})
gives a better optimizer than the unperturbed one, for both cases.
\begin{table}[htb]
\centering
\caption{Performance of PSAA for Example \ref{exmp:MoreEx1}}
\label{tab:MoreEx1}
\begin{tabular}{|c|c|c|c|c|}
\hline
\multicolumn{2}{|c|}{$\epsilon$}& $0$ & $10^{-4}$ & $10^{-3}$\\
\hline
\multirow{4}{*}{\rom{1}}
& solvable? & yes & yes & yes \\ 
& time(sec.) & 0.17 & 0.13 & 0.12\\ 
& $|\langle f_N,y^*\rangle-f_N(u)|$ & 1.86e+03 & 4.18e-06 & 3.55e-06\\ 
& $|\langle f_N,y^*\rangle-f_{min}|$ & 3.44e+01 & 2.09e-04 & 2.30e-02\\
\hline
\multirow{4}{*}{\rom{2}}
& solvable? & yes & yes & yes\\ 
& time(sec.) & 0.13 & 0.14 & 0.16\\ 
& $|\langle f_N,y^*\rangle-f_N(u)|$ & 1.85e+03 & 4.17e-06 & 3.56e-06\\ 
& $|\langle f_N,y^*\rangle-f_{min}|$ & 3.44e+01 & 1.29e-02 & 9.87e-03 \\
\hline
\end{tabular}
\end{table}
\end{exmp}

\begin{exmp}\label{exmp:MoreEx3}
Consider the stochastic polynomial optimization
\begin{equation}
\begin{split}
\min_{x\in \mathbb{R}^3}\quad & f(x) :=\mathbb{E}[G(x)+H(x,\xi) ]\\
\text{s.t.}\quad & x_1\geq 0,\ x_2\geq 0,\ x_3\geq 0,\ 1-\mathbf{1}^Tx\geq 0,
\end{split}
\end{equation}
where
\begin{align*}
& G(x) = x_1^4+x_1x_2x_3+x_3(1-x_1^2-x_2^2), \\
& H(x,\xi) = 2\xi_1 x_2^4-4\xi_1 x_1^2x_2^2-\xi_2x_1x_2,\\
& \xi_1 \sim Ber(0.5),\quad \xi_2\sim Geo(0.5).
\end{align*}
The feasible set $K$ is a simplex, which is compact and satisfies the archimedean condition.
For all samples $\xi^{(i)}$, the sample average $f_N(x)$
is bounded from below on $K$ and it has a minimizer. For this example,
$\mathbb{E}(\xi_1)=0.5$ and $\mathbb{E}(\xi_2)=2$, so
\[
f(x) = (x_1^2-x_2^2)^2+x_3(1-x_1^2-x_2^2)-x_1x_2(2-x_3).
\]
The optimal value and minimizer of \reff{eq:original} are
\[
f_{min} = -0.5 , \quad  v^* = (0.5, \, 0.5, \,0).
\]
We consider two cases of samples
\begin{align*}
& \text{\rom{1}}:\ \bar{\xi_1}=\mathbb{E}(\xi_1)+10^{-3},\
\bar{\xi_2}=\mathbb{E}(\xi_2),\ \epsilon^* \approx 0.001155; \\
& \text{\rom{2}}:\ \bar{\xi_1}=\mathbb{E}(\xi_1),\
\bar{\xi_2}=\mathbb{E}(\xi_2)+10^{-3},\ \epsilon^* \approx 7.5875\times 10^{-9}.
\end{align*}
In the above, $\epsilon^*$ is the minimum value of (\ref{min||p||}).
\begin{table}[htb]
\centering
\caption{Performance of PSAA for Example~\ref{exmp:MoreEx3}.}
\label{tab:MoreEx3}
\begin{tabular}{|c|c|c|c|c|c|}
\hline
\multirow{5}{*}{\rom{1}}
& $\epsilon$ & $0$ & $0.0012$ & $0.004$ & $0.008$\\ \cline{2-6}
& solvable? & no & yes & yes& yes\\ 
& time(sec.) & 0.12 & 0.10 & 0.07 & 0.09\\ 
& $|\langle f_N,y^*\rangle-f_N(u)|$ & n.a. & 5.31e-03 & 3.36e-04 & 1.09e-04\\ 
& $|\langle f_N,y^*\rangle-f_{min}|$ & n.a. & 5.44e-03 & 4.61e-04 & 2.34e-04\\
\hline
\multirow{5}{*}{\rom{2}}
& $\epsilon$ & $0$ & $10^{-4}$ & $10^{-3}$ & $10^{-2}$\\ \cline{2-6}
& solvable? & yes & yes & yes& yes\\ 
& time(sec.) & 0.08 & 0.08 & 0.07 & 0.06\\ 
& $|\langle f_N,y^*\rangle-f_N(u)|$ & 5.03e-09 & 1.84e-09 & 1.61e-09 & 1.86e-09\\ 
& $|\langle f_N,y^*\rangle-f_{min}|$ & 2.50e-04 & 2.50e-04 & 2.50e-04 & 2.50e-04
\\ \hline
\end{tabular}
\end{table}
The numerical results are reported in Table~\ref{tab:MoreEx3}.
The PSAA model (\ref{pop:SAAR}) performs very well for both cases.
Compared with the classical SAA model \reff{SAA} (i.e., $\eps = 0$),
it has quite clear advantages for case \rom{1}.
It successfully returned a good minimizer,
while \reff{SAA} is unbounded from below and does not return a minimizer.
\end{exmp}

\begin{exmp}\label{exmp:UcSPO}
Consider the unconstrained stochastic optimization
\be \label{eq:UcSPO}
\min_{x\in\mathbb{R}^4} \,  \mathbb{E} [G(x)+H(x,\xi)]
\ee
where
\begin{align*}
&\xi\sim\mathcal{P}(2),\ G(x)=(x_3-x_4)^4+(x_1+x_2)^4+x_1^2+x_2^2+x_3^2+x_4^2, \\
&H(x,\xi)=\xi-(\xi^2-2\xi)(x_1-x_4)-2(
\xi-1)(x_3-x_4)^2(x_1+x_2)^2.
\end{align*}
Evaluating the expectation, we get $\mathbb{E}(\xi)=2, \mathbb{E}(\xi^2)=6$ and
\[
f(x) = [(x_3-x_4)^2-(x_1+x_2)^2]^2+(x_1-1)^2+(1+x_4)^2+x_2^2+x_3^2.
\]
The optimal value and minimizer of \reff{eq:original} are
\[
f_{min} = 0,\quad v^*=(1,\,0,\,0,\,-1).
\]
%
%
Approximate $\xi,\xi^2$ by their sample averages $\bar{\xi}, s_2$, i.e., $s_2=\frac{1}{N}\sum_{k=1}^N(\xi^{(k)})^2$. We make samples of different sizes and compute $\epsilon^*$ in (\ref{min||p||}) for each case.
\begin{table}[htb]
\centering 
\caption{The values of $\eps^*$ for Example~\ref{exmp:UcSPO}.}
\begin{tabular}{|c|c|c|c|c|}
\hline
& \rom{1} & \rom{2} & \rom{3} & \rom{4} \\ \hline
$N$ & $500$ & $1000$ & $5000$ & $10000$\\ 
$\bar{\xi}$ & 2.11 & 1.96 & 2.01 & 2.02\\ 
$s_2$ & 6.43 & 5.71 & 6.13 & 6.07\\
\hline
$\epsilon^*$ & 0.807543 & 3.3618e-10 & 0.073413 & 0.146826\\
\hline
\end{tabular}
\end{table}
We focus on cases \rom{3} and \rom{4}.
\begin{table}[htb]
\centering
\caption{Performance of PSAA for Example~\ref{exmp:UcSPO}.}
\label{tab:UcSPO1}
\begin{tabular}{|c|c|c|c|c|c|c|}
\hline
 & \multicolumn{3}{c|}{\rom{3}} & \multicolumn{3}{c|}{\rom{4}}\\
\hline
$\epsilon$ & 0 & $\epsilon^*$ & 0.1 & 0 & $\epsilon^*$ & 0.2\\
\hline
solvable? & no & yes & yes & no & yes & yes \\ 
time(sec.) & 0.23 & 0.20 & 0.11 & 0.10 & 0.16 & 0.07\\
\hline
$|\langle f_N,y^*\rangle-f_N(u)|$ & n.a. & 5.30e+01 & 2.02e-01 & n.a. & 6.47e+01 & 2.28e-01\\ 
$|\langle f_N,y\rangle-f_{min}|$ & n.a. & 5.39e+01 & 3.90e-01 & n.a. & 6.49e+01 & 2.96e-01\\ 
$\|u-v^*\|$ & n.a. & 3.09e-02 & 1.28e-01 & n.a. & 5.86e-02 & 2.73e-01\\ \hline
$u$ & n.a. & \small$\begin{bmatrix}1.0216\\
    0.0035\\
    0.0035\\
   -1.0216\end{bmatrix}$ &
   \small$\begin{bmatrix}
   0.9102\\
    0.0071\\
    0.0071\\
   -0.9102\\
   \end{bmatrix}$
    & n.a. & \small$\begin{bmatrix}0.9591\\
    0.0059\\
    0.0059\\
   -0.9591\end{bmatrix}$ &
   \small$\begin{bmatrix}
   0.8070\\
    0.0085\\
    0.0085\\
   -0.8070
   \end{bmatrix}$\\
   \hline
\end{tabular}
\end{table}
The computational results are reported in Table~\ref{tab:UcSPO1}.
The perturbation term in the PSAA model \reff{pop:SAAR}
makes a big difference for computing reliable minimizers.
The PSAA model returned minimizers
that are close to the optimizer of (\ref{eq:original}),
while the classical SAA model (i.e., $\epsilon=0$) is unbounded from below
and fails to return a minimizer.
\end{exmp}

\begin{exmp}\label{exmp:nonlinear1}
Consider the stochastic polynomial optimization
\begin{equation}\label{eq:nonlinear1}
\begin{split}
\min_{x\in \mathbb{R}^2}\quad & f(x):=\mathbb{E}[G(x)+H(x,\xi)]\\
\text{s.t.}\quad & x_1-1\geq 0,\ x_2\geq 0,\ 2-\mathbf{1}^Tx\geq 0
\end{split}
\end{equation}
where
\begin{align*}
& G(x) = x_1^4+x_2^4+x_1x_2-2(x_1+x_2)+1,\\
& H(x,\xi) = \xi_3x_1x_2\big[\xi_1x_1+\xi_2 x_2-(\xi_1+\xi_2)x_1x_2\big],\\
& \xi\sim\mathcal{N}\left(
\begin{bmatrix}0\\0\\1\end{bmatrix},
\begin{bmatrix}1 & 0 & 1\\
0 & 1 & 1\\
1 & 1 & 3\end{bmatrix}
\right).
\end{align*}
Note that $\mathbb{E}(\xi_1\xi_3)=\mathbb{E}(\xi_2\xi_3)=1$ and the exact objective is
\[f = (x_1^2-x_2^2)^2+(x_1x_2-1)(x_1+x_2)+(x_1-1)(x_2-1). \]
Its optimal value and minimizer are
\[
f_{min} = -0.2500,\quad v^* = (1.0000,\,0.7071 ).
\]
Denote $s_1:=\sum_{k=1}^N \xi_1^{(k)}\xi_3^{(k)},\
s_2:=\sum_{k=1}^N \xi_2^{(k)}\xi_3^{(k)}$. We make samples of different sizes and compute $\epsilon^*$ in (\ref{min||p||}) for each case.
The values of $\epsilon^*$ are given in Table~\ref{tab:eps:4.6}.
\begin{table}[htb]
\centering
\caption{The values of $\eps^*$ for Example~\ref{exmp:nonlinear1}.}
\label{tab:eps:4.6}
\begin{tabular}{|c|c|c|c|c|c|}
\hline
& \rom{1} & \rom{2} & \rom{3} & \rom{4}\\ \hline
$N$ & $500$ & $1000$ & $5000$ & $10000$\\ 
$s_1$ & 0.98 & 1.08 & 1.01 & 0.97\\
$s_2$ & 1.06 & 0.96 & 1.02 & 0.96\\
\hline
$\epsilon^*$ & 0.023094 & 0.023094 & 0.017321 & 1.1076e-09
\\
\hline
\end{tabular}
\end{table}
We focus on cases \rom{2} and \rom{3}, for which
the numerical results are reported in Table~\ref{tab:nonlinear1}.
\begin{table}[htb]
\centering
\caption{Performance of PSAA for Example~\ref{exmp:nonlinear1}.}
\label{tab:nonlinear1}
\begin{tabular}{|c|c|c|c|c|c|c|}
\hline
 & \multicolumn{3}{c|}{\rom{2}} & \multicolumn{3}{c|}{\rom{3}}\\
\hline
$\epsilon$ & 0 & $\epsilon^*$ & 0.05& 0 & $\epsilon^*$ & 0.05\\
\hline
solvable? & no & yes & yes & no & yes & yes \\ 
time(sec.) & 0.06 & 0.21 & 0.06 & 0.06 & 0.13 & 0.05\\
\hline
$|\langle f_N,y^*\rangle-f_N(u)|$ & n.a. & 1.13e+02 & 1.36e-02 & n.a. & 5.65e+00 &6.25e-03\\ 
$|\langle f_N,y\rangle-f_{min}|$ & n.a. & 1.13e+02 & 4.21e-03 & n.a. & 5.65e+00 & 2.80e-03\\ 
$\|u-v^*\|$ & n.a. & 1.20e-03 & 1.85e-02 & n.a. & 6.08e-03 & 2.58e-02\\
\hline
$u$ & n.a. & \small$\begin{bmatrix}1.0000\\
    0.7059\end{bmatrix}$ & \small$
    \begin{bmatrix}
    1.0000\\
    0.6886
    \end{bmatrix}$ & n.a. & \small$\begin{bmatrix}1.0000\\
    0.7010\end{bmatrix}$ & \small$
    \begin{bmatrix}
    1.0000\\
    0.6813
    \end{bmatrix}$\\
   \hline
\end{tabular}
\end{table}
While the optimization (\ref{SAA}) is unbounded from below via the classical SAA model, our PSAA model (\ref{pop:SAAR}) provides good lower bounds and returns reliable minimizers for all cases.
\end{exmp}

\begin{exmp}\label{exmp:nonlinear2}
Consider the stochastic polynomial optimization
\begin{equation}\label{eq:nonlinear2}
\begin{split}
\min_{x\in\mathbb{R}^3}\quad & f(x)=\mathbb{E}[F(x,\xi)] \\
\text{s.t.}\quad & x_1-1\geq 0,\ x_2-1\geq 0,\ x_3-1\geq 0,\\
& 8-x_1x_2x_3\geq 0
\end{split}
\end{equation}
where $\xi\sim \mathcal{U}(0,2)$ and
\[F(x,\xi) = (-\xi^3+3\xi_2^2-\xi)x_1x_2x_3(x_1+x_2+x_3)+(\xi^3-\xi)(x_1x_2+x_2x_3+x_1x_3).
\]
One can see that $\mathbb{E}(\xi)=1, \mathbb{E}(\xi^2)=4/3$, $\mathbb{E}(\xi^3) = 2$, and the exact objective
\[f=x_1x_2x_3(x_1+x_2+x_3)+x_1x_2+x_2x_3+x_1x_3.\]
Its optimal value and minimizer are
\[f_{min}=6,\quad v^* = (1,\,1,\,1).\]
As in Examples~\ref{exmp:UcSPO}-\ref{exmp:nonlinear1},
we approximate $\xi,\xi^2,\xi^3$ by sample averages where
$s_2 = \sum_{k=1}^N\big(\xi^{(k)}\big)^2,\ s_3 = \sum_{k=1}^N\big(\xi^{(k)}\big)^3$.
Several samples of different sizes are made.
The values of $\epsilon^*$ in (\ref{min||p||}) are given in Table~\ref{tab:SamNonlinear2}.
\begin{table}[htb]
\centering
\caption{The values of $\eps^*$ for Example~\ref{exmp:nonlinear2}.}
\label{tab:SamNonlinear2}
\begin{tabular}{|c|c|c|c|c|}
\hline
& \rom{1} & \rom{2} & \rom{3} \\ \hline
$N$ & $500$ & $1000$ & $5000$\\ 
$\bar{\xi}$ & 0.99 & 1.03 & 1.00\\
$s_2$ & 1.32 & 1.38 & 1.33\\
$s_3$ & 1.97 & 2.09 & 1.99\\
\hline
$\epsilon^*$ & 0.508637 & 0.518810 & 0.508637
\\
\hline
\end{tabular}
\end{table}
We apply the classical SAA model (i.e., $\epsilon=0$) and the PSAA model
(i.e., $\epsilon=\epsilon^*$) to cases \rom{1}, \rom{2} and \rom{3}.
\begin{table}[htb]
\centering
\caption{Performance of PSAA for Example~\ref{exmp:nonlinear2}.}
\label{tab:nonlinear2}
\begin{tabular}{|c|c|c|c|c|c|c|}
\hline
 & \multicolumn{2}{c|}{\rom{1}} &\multicolumn{2}{c|}{\rom{2}} & \multicolumn{2}{c|}{\rom{3}}\\
\hline
$\epsilon$ & 0 & $\epsilon^*$ & 0 & $\epsilon^*$ & 0 & $\epsilon^*$\\
\hline
solvable? & yes & yes & yes & yes & yes & yes \\ 
time(sec.) &0.07& 0.06& 0.07 & 0.06 & 0.08 & 0.06 \\
\hline
$|\langle f_N,y^*\rangle-f_N(u)|$ &7.08e+01& 1.07e-06 & 7.08e+01 & 8.44e-07 & 7.08e+01 & 1.07e-06 \\ 
$|\langle f_N,y\rangle-f_{min}|$ & 2.07e+01 & 2.00e-02& 2.07e+01 & 1.00e-02 & 2.07e+01 & 1.00e-02 \\ 
$\|u-v^*\|$ & 1.67e+00 & 2.97e-08
&1.67e+00 & 2.97e-08 & 1.67e+00 & 2.97e-08\\ \hline
$u$ & \small$\begin{bmatrix}
1.9637\\
    1.9637\\
    1.9630
\end{bmatrix}$ &\small$\begin{bmatrix}1.0000\\
    1.0000\\
    1.0000\end{bmatrix}$ &\small$\begin{bmatrix}
1.9631\\
    1.9631\\
    1.9627
\end{bmatrix}$ & \small$\begin{bmatrix}1.0000\\
    1.0000\\
    1.0000\end{bmatrix}$ & \small$\begin{bmatrix}
    1.9631\\
    1.9631\\
    1.9627
    \end{bmatrix}$ & \small$\begin{bmatrix}1.0000\\
    1.0000\\
    1.0000\end{bmatrix}$ \\
   \hline
\end{tabular}
\end{table}
The computational results are reported in the Table~\ref{tab:nonlinear2}.
The PSAA model (\ref{pop:SAAR}) performs much better than
the classical SAA model (\ref{SAA}), as (\ref{pop:SAAR})
gives more reliable minimizers for all cases.
\end{exmp}

\section{Conclusion}

This paper proposes a sample average optimization model
with a perturbation term for solving stochastic polynomial optimization.
The perturbation optimization model performs better than
the classical one without perturbations.
The Lasserre type moment relaxations are used
to solve the perturbation optimization. In particular,
we show that the moment relaxation is tight if and only if
the moment matrix of the minimizer is rank one.
Numerical experiments demonstrated advantages of
our perturbation optimization model.

\end{document}